\numberwithin{equation}{section}
\newtheorem{definition}{Definition}[section]
\newtheorem{theorem}[definition]{Theorem}
\newtheorem{lemma}[definition]{Lemma}
\newtheorem{remark}[definition]{Remark}
\def\N{{\mathbb N}}
\def\Z{{\mathbb Z}}
\def\R{{\mathbb R}}
\def\C{{\mathbb C}}
\newcommand{\lspan}{{\mathrm{span}}}
\newcommand{\supp}{{\mathrm{supp}}}
\newcommand{\lt}{{L^2(\R)}}
\newcommand{\re}{{\mathrm{Re} \,}}
\newcommand{\abs}[1]{\left\lvert #1\right\rvert}
\begin{document}

\title[Translation-based completeness on compact intervals]{Translation-based completeness on \\ compact intervals}

\author[Lukas Liehr]{Lukas Liehr}
\address{Faculty of Mathematics, University of Vienna, Oskar-Morgenstern-Platz 1, 1090 Vienna, Austria}
\email{lukas.liehr@univie.ac.at}

\subjclass[2020]{42C30, 42A65}
\keywords{discrete translates, completeness, Zalik-type results}

\date{\today}

\begin{abstract}
Given a compact interval $I \subseteq \mathbb{R}$, and a function $f$ that is a product of a nonzero polynomial with a Gaussian, it will be shown that the translates $\{ f(\cdot - \lambda) : \lambda \in \Lambda \}$ are complete in $C(I)$ if and only if the series of reciprocals of $\Lambda$ diverges. This extends a theorem in [R. A. Zalik, Trans. Amer. Math. Soc. 243, 299-308]. An additional characterization is obtained when $\Lambda$ is an arithmetic progression, and the generator $f$ constitutes a linear combination of translates of a function with sufficiently fast decay.
\end{abstract}

\maketitle

\section{Introduction and results}

Let $X$ be a Banach space of functions defined on a measurable set $I \subseteq \R$, such as $L^p(I)$ or $C(I)$, equipped with the usual norms. For a given $f : \R \to \C$ and a translation set $\Lambda \subseteq \R$, we define the system of $\Lambda$-translates of $f$ by
$$
\mathcal{T}(f,\Lambda) = \{ f(\cdot - \lambda) : \lambda \in \Lambda \}.
$$
The \emph{completeness problem of translates} investigates under what conditions on $X,f$, and $\Lambda$, the system $\mathcal{T}(f,\Lambda)$ is complete in $X$. A result by Wiener states that if $f \in L^1(\R)$ (resp. $f \in \lt$) then $\mathcal{T}(f,\R)$ is complete in $L^1(\R)$ (resp. $\lt$) if and only if the Fourier transform of $f$ does not vanish anywhere (resp. almost everywhere) \cite{wiener}. It was shown that such a characterization does not hold for $L^p(\R)$ with $1<p<2$ \cite{olevskii6,segal1944span}.

There has been significant interest in determining whether completeness of translates is possible when the translation set $\Lambda$ is a discrete set, such as the integers or a uniformly discrete set, i.e., a set with the property that there exists a constant $c>0$ such that $|\lambda-\lambda'| \geq c$ for all distinct $\lambda,\lambda' \in \Lambda$ \cite{olevskii1,olevskii2,abakumov2008cyclicity,lev2024completenessuniformlydiscretetranslates,olevskii1997completeness}.

In the present paper we are interested in the completeness problem of discrete translates in spaces of compactly supported functions. Precisely, we assume that $f \in C(\R)$ is a (globally) continuous function, $I \subseteq \R$ is a compact set, and $\Lambda \subseteq \R$ is a translation set. The question is whether for every $\varepsilon>0$ and every $g \in C(I)$ there exists $h \in \lspan(\mathcal{T}(f,\Lambda))$ such that
$$
\sup_{t \in I} |g(t)-h(t)| \leq \varepsilon.
$$

Inquiries of the above nature arise in the study of generalized periodic functions. Specifically, a function $f \in C(\R)$ is called \emph{mean-periodic} if the convolution equation
$$
f * \mu (x) \coloneqq \int_\R f(x-t) \, d\mu(t) = 0, \quad x \in \R,
$$
admits a non-trivial solution $\mu \in \mathcal{M}$, where $\mathcal{M}$ denotes the space of all compactly supported complex regular Borel measures on $\R$ \cite{schwartz,kahane}. By duality this is equivalent to the property that $\mathcal{T}(f,\R)$ is incomplete in $C(I)$ with $I = \supp(\mu)$.

Moreover, it has been recently shown that obtaining completeness results in $C(I)$ with respect to a discrete translation set $\Lambda$ implies solutions to the so-called \emph{uniqueness problem in phase retrieval} \cite{grohsliehr1,wellershoff}. The phase retrieval problem refers to a non-linear inverse problem which garnered considerable attention in recent times \cite{christ2023examples,bendory2022algebraic,jaming2023uniqueness}. A systematic connection between the uniqueness problem in phase retrieval and the completeness problem of translates was derived in \cite{grohs2022completeness}.

The completeness problem of $\mathcal{T}(f,\Lambda)$ in $C(I)$ for different choices of $f$ and $\Lambda$ was investigated by several authors:

\begin{itemize}
    \item \textbf{Akhiezer, Borwein-Erdélyi, Min.} Let $f(t)=\frac{1}{t}$ and let $\Lambda = \{ \lambda_n \} \subseteq \R \setminus [-1,1]$ be a sequence. A theorem of Akhiezer states that $\mathcal{T}(f,\Lambda)$ is complete in $C[-1,1]$, if and only if $\sum_n \sqrt{\lambda_n^2 - 1} = \infty$ \cite[p. 254, Problem 7]{achieser2013theory}. This theorem can be proved by establishing the existence of a so-called everywhere unbounded Bernstein inequality, as shown by Borwein and Erdély \cite{borwein1995dense,borwein2012polynomials}. Generalizations of this statement to rational functions beyond $f(t)=\frac{1}{t}$ are due to Min \cite{min1999denseness}.
    \item \textbf{Landau.} Suppose that $f \in L^1(\R) \setminus \{ 0 \}$ is the restriction to $\R$ of an entire function. Let $b>a>0$ and let $\Lambda$ be a zero-sequence of distinct numbers in $[-(b-a),b-a]$. A theorem of Landau states that $\mathcal{T}(f,\Lambda)$ is complete in $C[-a,a]$ \cite{LANDAU1972438}. 
    \item \textbf{Zalik.} Let $f(t)=e^{-ct^2}$ with $c>0$, let $\Lambda$ be a sequence of distinct real numbers, and let $I$ be a compact interval. A theorem of Zalik states that $\mathcal{T}(f,\Lambda)$ is complete in $L^2(I)$ if any only if $\sum_{\lambda \in \Lambda} \frac{1}{|\lambda|} = \infty$ \cite{Zalik}. An analogous statement with $L^2(I)$ replaced by $C(I)$ was shown by Wellershoff \cite{wellershoff}.
\end{itemize}

Motivated by Zalik's theorem, we consider functions of the form $f = q \varphi$, where $q$ is a polynomial and $\varphi$ is a Gaussian. Recent studies have shown that for such products, $\mathcal{T}(f, a \Z)$ is complete in $C(I)$, provided a density condition is met, i.e., the parameter $a$ is a sufficiently small positive constant depending on $I$ \cite{grohs2022completeness}.
The following result demonstrates that no density condition is required; instead, a Müntz-type condition is both necessary and sufficient. This leads to a characterization of completeness in the spirit of Zalik.

\begin{theorem}\label{thm:1}
    Let $q$ be a nonzero polynomial, let $\varphi(t)=e^{-ct^2}$ with $c>0$ be a Gaussian, let $I \subseteq \R$ be a compact interval, and let $\Lambda \subseteq \R$ be a sequence of distinct real numbers. Further, let $X$ be one of the spaces $C(I)$ or $L^p(I)$ with $p \in [1,\infty)$. Then the following two statements are equivalent.
    \begin{enumerate}
        \item $\mathcal T(q\varphi,\Lambda)$ is complete in $X$.
        \item The series of reciprocals of $\Lambda$ diverges,
        \begin{equation}
        \sum_{\substack{\lambda \in \Lambda \\ \lambda \neq 0}} \frac{1}{|\lambda|} = \infty.
    \end{equation}
    \end{enumerate}
\end{theorem}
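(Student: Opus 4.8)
The plan is to prove the two implications separately. For (2)$\Rightarrow$(1) I treat $q\varphi$ directly via a Cartwright–class uniqueness argument; for (1)$\Rightarrow$(2) I reduce to the pure–Gaussian case handled by Zalik and Wellershoff. Throughout write $I=[a,b]$ with $a<b$ and $n=\deg q$.

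\emph{(2) $\Rightarrow$ (1).} By Hahn–Banach it suffices to show that the only $\mu\in X^{*}$ annihilating $\mathcal T(q\varphi,\Lambda)$ is $\mu=0$; in every case $\mu$ is (represented by) a finite complex Borel measure supported in $I$. I would consider
$$F(z)=\int_{I}q(t-z)\varphi(t-z)\,d\mu(t),$$
which is entire (Morera together with Fubini), vanishes on $\Lambda$, and factors as $F(z)=e^{-cz^{2}}P(z)$ with $P(z)=\int_{I}q(t-z)e^{-ct^{2}+2ctz}\,d\mu(t)$ entire. Estimating the integrand on $I$ gives, with $\tau:=2c\max(\abs a,\abs b)$,
$$\abs{P(x+iy)}\le C\,(1+\abs x)^{n}(1+\abs y)^{n}e^{\tau\abs x},\qquad z=x+iy,$$
so $P$ has exponential type at most $\tau$. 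The decisive move is a rotation: put $K(z)=P(iz)$. Then $\abs{K(x+iy)}\le C(1+\abs x)^{n}(1+\abs y)^{n}e^{\tau\abs{\im z}}$, so $K$ is entire of exponential type with at most polynomial growth on $\R$; in particular $K$ lies in the Cartwright class, and it vanishes at every point of $\{-i\lambda:\lambda\in\Lambda\}\subseteq i\R$. A nonzero function of the Cartwright class has zeros $\{z_{m}\}$ with $\sum_{z_{m}\neq0}\abs{\im z_{m}}/\abs{z_{m}}^{2}<\infty$, and along $i\R$ this sum is exactly $\sum_{\lambda\neq0}1/\abs\lambda$; by (2) it diverges, forcing $K\equiv0$, hence $F\equiv0$. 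Finally $F=g*\mu$ with $g(u)=q(-u)e^{-cu^{2}}$, and $\widehat g$ is a nonzero polynomial times a Gaussian, hence has finitely many zeros; from $\widehat g\,\widehat\mu\equiv0$ and continuity of $\widehat\mu$ one gets $\widehat\mu\equiv0$, i.e.\ $\mu=0$.

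\emph{(1) $\Rightarrow$ (2), contrapositive.} The case $n=0$ is the quoted result, so assume $n\ge1$ and $\sum_{\lambda\neq0}1/\abs\lambda<\infty$; I want a nonzero $\psi\in L^{\infty}(I)$ annihilating $\mathcal T(q\varphi,\Lambda)$. The algebraic key is that $\{\varphi^{(j)}\}_{j=0}^{n}$ spans the space of polynomials of degree $\le n$ times $\varphi$, so $q\varphi=\mathcal L\varphi$ for a constant-coefficient operator $\mathcal L=\sum_{j=0}^{n}\beta_{j}\tfrac{d^{j}}{dt^{j}}$ with $\beta_{n}\neq0$; as $\mathcal L$ commutes with translations, $q\varphi(\cdot-\lambda)=\mathcal L[\varphi(\cdot-\lambda)]$, whence for a compactly supported $\psi$ one has $\langle q\varphi(\cdot-\lambda),\psi\rangle=\langle\varphi(\cdot-\lambda),\mathcal L^{*}\psi\rangle$. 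Thus it is enough to solve $\mathcal L^{*}\psi=\nu$ with $\psi\in L^{\infty}(I)\setminus\{0\}$ and $\nu$ a nonzero measure on $I$ annihilating $\mathcal T(\varphi,\Lambda)$. By Wellershoff's theorem such $\nu$ exist (the divergence condition fails), and in fact the space $W$ of all such annihilating measures is infinite-dimensional: applying Wellershoff's theorem on a slightly smaller interval produces an annihilator supported in the interior of $I$, and performing small translations on the Laplace-transform side (equivalently, translating the measure after removing and restoring the Gaussian weight) yields infinitely many linearly independent elements of $W$. The equation $\mathcal L^{*}\psi=\nu$ is solvable with $\psi$ a compactly supported distribution exactly when the Fourier–Laplace transform of $\nu$ vanishes, to the relevant multiplicities, at the $n$ zeros of the symbol of $\mathcal L^{*}$; these are $n$ linear constraints on $\nu$, so since $\dim W=\infty$ we may pick $\nu\in W\setminus\{0\}$ satisfying them. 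For this $\nu$, comparing the two one-sided fundamental solutions of $\mathcal L^{*}$ shows the solution $\psi$ is supported in $\operatorname{conv}(\supp\nu)\subseteq I$; and since a fundamental solution of the order-$n$ operator $\mathcal L^{*}$ is locally bounded for $n\ge1$, its convolution with the finite measure $\nu$ is a bounded function, so $\psi\in L^{\infty}(I)\subseteq X^{*}$. Then $\psi\neq0$ (because $\mathcal L^{*}\psi=\nu\neq0$) and $\langle q\varphi(\cdot-\lambda),\psi\rangle=\langle\varphi(\cdot-\lambda),\nu\rangle=0$ for all $\lambda$, so $\mathcal T(q\varphi,\Lambda)$ is incomplete in $X$.

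\emph{Main obstacle.} Direction (2)$\Rightarrow$(1) is, after the rotation, the Cartwright uniqueness theorem, and the polynomial contributes only harmless polynomial weights. The difficulty sits in the reverse direction: because $q\varphi$ is a differential operator applied to $\varphi$, turning an annihilator of the Gaussian system into one of $\mathcal T(q\varphi,\Lambda)$ requires \emph{inverting} that operator while keeping the support inside $I$, which is possible only after imposing finitely many vanishing conditions on the chosen Gaussian annihilator. This is precisely why one needs the annihilator space of $\mathcal T(\varphi,\Lambda)$ to be infinite-dimensional, not merely nontrivial; proving that, and controlling the support through the inversion, is the step I expect to demand the most care.
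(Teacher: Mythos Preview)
Your proof is correct, but it follows a genuinely different path from the paper's in both directions.

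For $(2)\Rightarrow(1)$, the paper works directly on a half-plane: it splits $\Lambda$ into positive and negative parts, multiplies the integral transform by $e^{-2cMz}/(z+1)^{d+1}$ to obtain a bounded holomorphic function on $\mathbb{H}_+$, pushes this to $\mathbb{D}$ via the M\"obius map $z\mapsto\tfrac{z-1}{z+1}$, and invokes the classical Blaschke condition. Your rotation $K(z)=P(iz)$ followed by the Cartwright-class zero estimate is a legitimate alternative, and it has the minor advantage of handling both signs of $\lambda$ at once through the upper and lower half-plane Blaschke conditions; on the other hand it trades the elementary disk Blaschke lemma for the somewhat heavier Cartwright machinery (which, unwound, is again Phragm\'en--Lindel\"of plus half-plane Blaschke after dividing by $(z+i)^{n+1}$).

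For $(1)\Rightarrow(2)$, the contrast is sharper. The paper does not reduce to the Gaussian case at all: it applies the Luxemburg--Korevaar theorem to the sequence in which each $2ic\lambda$ is repeated $d+1$ times, producing $g\in C_c^\infty(I)\setminus\{0\}$ whose Fourier--Laplace transform $G$ vanishes to order at least $d+1$ at every $2ic\lambda$. Expanding $\int_I q(t-z)e^{2czt}g(t)\,dt$ as a linear combination of $G,G',\dots,G^{(d)}$ evaluated at $2icz$ then shows immediately that $e^{ct^2}g(t)\,dt$ annihilates $\mathcal T(q\varphi,\Lambda)$. Your route---writing $q\varphi=\mathcal L\varphi$, borrowing an annihilator $\nu$ of $\mathcal T(\varphi,\Lambda)$ from Zalik/Wellershoff, and then inverting $\mathcal L^{*}$ while keeping the support inside $I$---is conceptually attractive because it explains structurally why the polynomial factor is harmless, but it costs exactly the extra work you anticipate: you must show the annihilator space is infinite-dimensional (your translation trick on $e^{-ct^{2}}\,d\nu$ does this), impose the $n$ symbol conditions, and check the $L^\infty$ bound and support of the ODE solution. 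All of this goes through, but the paper's multiplicity device sidesteps the dimension count and the ODE inversion entirely, yielding a shorter and more self-contained argument.
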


Theorem \ref{thm:1} asserts that in Zalik's characterization of discrete translates, the Gaussian can be replaced by a product of a polynomial with a Gaussian. We now consider whether completeness is preserved when the generating function $f$ is replaced by a linear combination of shifts of $f$. The following statement shows that this is indeed the case if the generating function has a sufficiently fast decay and $\Lambda$ is an arithmetic progression, i.e., $\Lambda = a\Z + b$ for some $a, b \in \R$ with $a \neq 0$.

To state this precisely, we recall that a function $f : \R \to \C$ is said to have super-exponential decay if for every $\gamma \in \R$, the following condition holds:
\begin{equation}
\lim\limits_{|x| \to \infty} f(x)e^{\gamma |x|} = 0.
\end{equation}

\begin{theorem}\label{thm:2}
Let $a, b \in \R$ with $a \neq 0$, let $I \subseteq \R$ be a compact interval, and let $X$ be one of the spaces $C(I)$ or $L^p(I), \, p \in [1,\infty)$. If $f \in C(\R)$ has super-exponential decay, then the following statements are equivalent.
\begin{enumerate}
    \item The system $\mathcal{T}(f,a\Z+b)$ is complete in $X$.
    \item There exists $F \in \lspan(\mathcal{T}(f,a\Z+b))$ such that $\mathcal{T}(F,a\Z)$ is complete in $X$.
    \item For every $F \in \lspan(\mathcal{T}(f,a\Z+b)) \setminus \{ 0 \}$ it holds that $\mathcal{T}(F,a\Z)$ is complete in $X$.
\end{enumerate}
\end{theorem}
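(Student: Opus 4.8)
The strategy is to prove $(2)\Rightarrow(1)\Rightarrow(3)\Rightarrow(2)$, with all the substance residing in $(1)\Rightarrow(3)$. By the affine change of variables $t\mapsto at+b$, which maps $\mathcal{T}(f,a\Z+b)$ on $I$ onto $\mathcal{T}(\tilde f,\Z)$ on $a^{-1}(I-b)$ — where $\tilde f(u)=f(au+b)$ again has super-exponential decay — and maps $X$ boundedly onto the corresponding space over $a^{-1}(I-b)$ with bounded inverse, we may and do assume $a=1$ and $b=0$, so that $\Lambda=\Z$; we also assume $f\not\equiv0$, the opposite case being degenerate. The implication $(2)\Rightarrow(1)$ is immediate: if $F=\sum_k c_k f(\cdot-k)\in\lspan(\mathcal{T}(f,\Z))$ then $F(\cdot-m)=\sum_k c_k f(\cdot-(k+m))\in\lspan(\mathcal{T}(f,\Z))$ for every $m\in\Z$, hence $\lspan(\mathcal{T}(F,\Z))\subseteq\lspan(\mathcal{T}(f,\Z))$ and density of the first space forces density of the second. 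The implication $(3)\Rightarrow(2)$ is immediate by taking $F=f$. (Here the full arithmetic-progression structure of $\Lambda$ is essential: it is what makes the span of the $\Z$-translates of $F$ sit inside the span of the $\Z$-translates of $f$.)

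For $(1)\Rightarrow(3)$, assume $\mathcal{T}(f,\Z)$ is complete in $X$ and fix $F=\sum_{k\in S}c_k f(\cdot-k)\in\lspan(\mathcal{T}(f,\Z))\setminus\{0\}$ with $S\subseteq\Z$ finite. Suppose, for contradiction, that $\mathcal{T}(F,\Z)$ is not complete in $X$. By the Hahn--Banach theorem there is a nonzero $\Phi\in X^*$ annihilating $\mathcal{T}(F,\Z)$; concretely $\Phi$ is given by a finite regular Borel measure on $I$ when $X=C(I)$, and by a function in $L^{p'}(I)$ when $X=L^p(I)$. Put
$$
g_n:=\Phi\big(f(\cdot-n)|_I\big),\qquad n\in\Z.
$$
Since $I$ is bounded, say $I\subseteq[-R,R]$, we have $\abs{g_n}\le\|\Phi\|\,\|f(\cdot-n)|_I\|_X\le C\sup_{\abs{s}\ge\abs{n}-R}\abs{f(s)}$, and the super-exponential decay of $f$ makes this bound decay faster than any geometric rate as $\abs{n}\to\infty$. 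Moreover, expanding $0=\Phi(F(\cdot-m))$ for $m\in\Z$ yields the two-sided linear recurrence
$$
\sum_{k\in S}c_k\,g_{m+k}=0,\qquad m\in\Z.
$$

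The proof is completed by a $z$-transform argument, which is the crucial point. Because $(g_n)_{n\in\Z}$ decays faster than every geometric sequence, the Laurent series $G(z):=\sum_{n\in\Z}g_n z^{-n}$ converges on the punctured plane $\C\setminus\{0\}$ and defines a holomorphic function there. Writing $P(z):=\sum_{k\in S}c_k z^k$, a nonzero Laurent polynomial, the coefficient of $z^{-m}$ in the Laurent expansion of $P(z)G(z)$ is exactly $\sum_{k\in S}c_k g_{m+k}$, which vanishes for every $m\in\Z$ by the recurrence; hence $P\cdot G\equiv0$ on $\C\setminus\{0\}$. Since $\C\setminus\{0\}$ is connected and $P$ has only finitely many zeros there, the identity theorem forces $G\equiv0$, so $g_n=0$ for all $n\in\Z$. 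But $g_n=\Phi(f(\cdot-n)|_I)$, so $\Phi$ annihilates the complete system $\mathcal{T}(f,\Z)$ and therefore $\Phi=0$, contradicting the choice of $\Phi$. Thus $\mathcal{T}(F,\Z)$ is complete, which is $(3)$, and the cycle closes.

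The only step requiring any care is the estimate showing that $(g_n)$ inherits decay fast enough (in fact mere exponential decay of $f$ would already make $G$ holomorphic on an annulus around the unit circle, which is all that is used), and this is routine given the compactness of $I$. The genuine mechanism of the theorem is the rigidity in the last paragraph: a two-sided sequence that decays at both ends of $\Z$ and is annihilated by a nonzero difference operator must vanish identically — an assertion that fails without the fast decay, and for which the arithmetic-progression structure of $\Lambda$ is equally indispensable.
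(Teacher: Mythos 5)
Your proof is correct, and it takes a genuinely different route from the paper's. The paper proves $(1)\Rightarrow(3)$ constructively: a recursive polynomial identity (its Lemma \ref{lemma:polynomials}) produces, for each $\ell$, an element $A_\ell = T_b f + \sum_j p_{\ell,j}(c)\,T_{a(\ell+j-1)+b}f$ of $\lspan(\mathcal{T}(F,a\Z))$ whose coefficients grow at most like $C^\ell$; the super-exponential decay of $f$ then forces $A_\ell \to T_b f$ in $X$, and shift-invariance of the closed span finishes the argument. You instead dualize: a nonzero functional $\Phi$ annihilating $\mathcal{T}(F,a\Z)$ yields a two-sided sequence $g_n=\Phi(f(\cdot-n)|_I)$ that decays faster than every geometric rate and satisfies the finite linear recurrence $\sum_k c_k g_{m+k}=0$; the Laurent series $G(z)=\sum_n g_n z^{-n}$ is then annihilated by the nonzero Laurent polynomial $P(z)=\sum_k c_k z^k$, whence $G\equiv 0$, all $g_n=0$, and $\Phi=0$ by completeness of $\mathcal{T}(f,\Z)$. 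Both arguments turn on the same tension — growth of the inverted difference operator versus decay of $f$ — but yours packages it into the classical rigidity of the $z$-transform (equivalently: no nonzero solution of a finite two-sided linear recurrence decays at both ends of $\Z$), and, as you observe, it needs only exponential decay at some fixed positive rate, enough to make $G$ holomorphic on a nonempty annulus; so your argument in fact proves a slightly stronger statement than the paper's. Two minor bookkeeping points: under the normalization $t\mapsto at+b$ the conjugated generator should be $\tilde f(u)=f(au)$ rather than $f(au+b)$ for the translates to match up (harmless), and you should note explicitly that $F\neq 0$ forces the chosen coefficient vector $(c_k)_{k\in S}$ to be nonzero, so that $P\not\equiv 0$ — a fact your identity-theorem step silently uses.
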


\section{Proof of Theorem \ref{thm:1}}

In order to prove Theorem \ref{thm:1} we require auxiliary results on holomorphic functions. Accordingly, we denote by $\mathcal{O}(\Omega)$ the space of all holomorphic functions on $\Omega \subseteq \C$. Further, $\mathbb H_+ \coloneqq \{ z \in \C : \re(z) > 0 \}$ and $\mathbb D \coloneqq \{ z \in \C : |z|<1 \}$ denote the open right half-plane and the open unit disk, respectively. The function
\begin{equation}\label{eq:phi}
    \phi(z) = \frac{z-1}{z+1}
\end{equation}
is a biholomorphic map from $\mathbb H_+$ to $\mathbb D$. 

We notice, that if $Z = \{ z_n \}_{n \in \N}$ are the zeros of an analytic function $F$, then $Z$ is understood as a sequence. In particular, if a zero $z'$ repeats $m$ times in the sequence $Z$, then $F$ has a zero of order $m$ at $z'$.

The following is a classical statement on zeros of holomorphic functions on the unit disc.

\begin{lemma}\label{lma:blaschke}
    Let $F \in \mathcal{O}(\mathbb D) \setminus \{ 0 \}$ be bounded with zeros $Z = \{ z_n \}_{n \in \N}$. Then the Blaschke condition is satisfied:
    $$
    \sum_{n \in \N} 1 - |z_n| < \infty.
    $$
\end{lemma}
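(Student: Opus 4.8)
The plan is to invoke Jensen's formula. First I would reduce to the case $F(0) \neq 0$: if $F$ vanishes to order $k$ at the origin, replace $F$ by $z \mapsto F(z)/z^k$, which is again holomorphic on $\mathbb{D}$, is still bounded on $\mathbb{D}$ (indeed $\abs{F(z)/z^k} \le 2^k \sup_{\mathbb D}\abs{F}$ for $\abs{z} \ge 1/2$, and the maximum modulus principle extends this bound to $\{\abs{z}<1/2\}$), and has exactly the same nonzero zeros as $F$. If $F$ has only finitely many zeros there is nothing to prove, so assume the zero sequence is infinite and enumerate it as $\abs{z_1} \le \abs{z_2} \le \cdots$; since for every $\rho < 1$ only finitely many zeros lie in $\{\abs{z}\le\rho\}$ (the zeros have no accumulation point in $\mathbb D$ by the identity theorem), we have $\abs{z_n} \to 1$.

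Next, set $M \coloneqq \sup_{z \in \mathbb{D}} \abs{F(z)} < \infty$. For any radius $r \in (0,1)$ such that $F$ has no zero on the circle $\{\abs{z} = r\}$ -- which excludes only countably many values of $r$ -- Jensen's formula gives
$$
\sum_{\abs{z_n} < r} \log \frac{r}{\abs{z_n}} \;=\; \frac{1}{2\pi} \int_0^{2\pi} \log \abs{ F(re^{i\theta}) } \, d\theta \;-\; \log \abs{F(0)} \;\le\; \log M - \log\abs{F(0)},
$$
whence $\prod_{\abs{z_n} < r} \bigl( r/\abs{z_n} \bigr) \le M/\abs{F(0)}$ for all such $r$. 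Fixing $N \in \N$ and letting $r \to 1^-$ along radii avoiding the (countable) zero set, all of $z_1,\dots,z_N$ eventually lie in $\{\abs{z}<r\}$, so $\prod_{n=1}^N \bigl(r/\abs{z_n}\bigr) \le M/\abs{F(0)}$; passing to the limit yields $\prod_{n=1}^N \abs{z_n}^{-1} \le M/\abs{F(0)}$. As $N$ is arbitrary and every factor is $\ge 1$, the infinite product $\prod_n \abs{z_n}^{-1}$ converges, equivalently $\sum_n \log \abs{z_n}^{-1} < \infty$. Finally, the elementary inequality $1 - x \le -\log x$ for $x \in (0,1]$ gives $\sum_n \bigl(1 - \abs{z_n}\bigr) \le \sum_n \log\abs{z_n}^{-1} < \infty$, which is the Blaschke condition.

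The only genuine subtleties are the precise invocation of Jensen's formula and the choice of exhausting radii $r$ avoiding zeros on $\{\abs{z}=r\}$; neither is serious. Alternatively one can bypass Jensen's formula entirely: divide $F$ by the finite Blaschke product $B_N(z) = \prod_{n=1}^N \frac{\abs{z_n}}{z_n}\cdot \frac{z_n - z}{1 - \overline{z_n} z}$, note that $\abs{B_N} \to 1$ uniformly as $\abs{z} \to 1$, apply the maximum modulus principle to $F/B_N$ on $\mathbb{D}$, and evaluate at $z = 0$ to obtain $\abs{F(0)} \bigl(\prod_{n=1}^N \abs{z_n}\bigr)^{-1} \le M$ directly; the rest is unchanged. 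I expect the Jensen route to be the cleanest to write out.
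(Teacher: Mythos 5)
Your argument is correct. Note, though, that the paper does not prove this lemma at all: it is stated as ``a classical statement on zeros of holomorphic functions on the unit disc'' and used as a black box, so there is no in-paper proof to compare against. Your write-up is the standard Jensen's-formula proof of the Blaschke condition, and all the details you flag as subtleties are handled properly: the reduction to $F(0)\neq 0$ via division by $z^k$ (with the maximum-modulus argument to preserve boundedness), the choice of radii $r$ avoiding the countably many circles containing zeros, the monotonicity that lets you pass from the partial product $\prod_{n=1}^N$ to the full product over $\{\lvert z_n\rvert < r\}$, and the final comparison $1-x \le -\log x$. The alternative route via finite Blaschke products and the maximum modulus principle is equally standard and equally valid. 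Either version would serve as a complete proof of the lemma.
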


The next result of Luxemburg and Korevaar concerns Fourier transforms of smooth functions supported on a compact interval \cite[Theorem 5.2]{LuxemburgKorevaar}.

\begin{theorem}\label{thm:lux_korevaar}
    Let $\{ \gamma_n \}_{n \in \N} \subseteq \C$ be a sequence of complex numbers, not containing infinitely many zeros. Further, let $I \subseteq \R$ be a compact interval. If the sequence $\{ \gamma_n \}_{n \in \N}$ satisfies
    $$
    \sum_{\substack{n \in \N \\  \gamma_n \neq 0}} \frac{1}{|\gamma_n|} < \infty,
    $$
    then there exists a function $g \in C^\infty(\R) \setminus \{ 0 \}$ with $\supp(g) \subseteq I$, such that
    $$
    G(z) \coloneqq \int_I e^{-izt}g(t) \, dt
    $$
    defines a non-trivial entire function with the property that $\{ \gamma_n \}_{n \in \N}$ is contained in the zero-sequence of $G$.
\end{theorem}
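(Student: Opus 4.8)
The plan is to construct the entire function $G$ directly, forcing it to have the prescribed zeros together with the growth profile that characterizes Fourier transforms of smooth compactly supported functions, and then to recover $g$ from $G$ via the Paley--Wiener--Schwartz theorem. First I would peel off the (finitely many) indices with $\gamma_n = 0$, say of total multiplicity $m$, and absorb them into a factor $z^m$. For the remaining nonzero terms the hypothesis $\sum 1/|\gamma_n| < \infty$ guarantees that the genus-zero canonical product
\[
P(z) = z^m \prod_{\gamma_n \neq 0} \left( 1 - \frac{z}{\gamma_n} \right)
\]
converges locally uniformly to an entire function whose zero-sequence contains $\{\gamma_n\}$, and that $P$ has exponential type zero. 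Thus $P$ already meets the zero requirement and (vacuously) any exponential-type requirement; what is missing is decay on the real axis, which is precisely the regularity that will force $g$ to be smooth.

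The decay will be supplied by a multiplicative regularizer. I would fix a subinterval $J = [\alpha, \alpha + \delta] \subseteq I$ with $\delta$ positive and smaller than the length of $I$, and seek a nonzero $s \in C^\infty(\R)$ with $\supp(s) \subseteq J$ whose transform $S(z) = \int_J e^{-izt} s(t)\,dt$ is entire of exponential type at most $\delta$ and decays on $\R$ faster than $1/|P|$ and faster than every polynomial. Granting such an $s$, the product $G \coloneqq P \cdot S$ is entire, vanishes on $\{\gamma_n\}$, has exponential type at most $\delta$ (since $P$ has type zero, the indicator of $G$ equals that of $S$), and is rapidly decreasing on $\R$. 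The Paley--Wiener--Schwartz theorem then identifies $G$ as the transform of some $g \in C^\infty(\R)$ whose support function coincides with that of $S$, whence $\supp(g) \subseteq J \subseteq I$; moreover $g \neq 0$ because $G \not\equiv 0$. This reduces the whole statement to producing the regularizer $S$ with the stated real-axis decay.

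The construction of $s$ is the heart of the matter, and the step I expect to be the main obstacle, for it is where the Müntz-type hypothesis must be used a second time. I would first bound $\log|P|$ on the real axis by an increasing weight: splitting the zeros according to $|\gamma_n| \le |x|$ and $|\gamma_n| > |x|$ and estimating each range (exploiting genuine cancellation in $\log|1 - x/\gamma_n|$ for complex $\gamma_n$, rather than the lossy bound $\log(1 + |x|/|\gamma_n|)$), one obtains $\log|P(x)| \le \omega(|x|)$ with
\[
\int_1^\infty \frac{\omega(t)}{t^2}\,dt < \infty .
\]
The crucial observation is that, after an interchange of summation and integration against the zero-counting function $n(t) = \#\{ n : |\gamma_n| \le t\}$, this finiteness is equivalent to the very assumption $\sum 1/|\gamma_n| < \infty$; the same integral condition simultaneously controls the growth of $P$ and certifies non-quasi-analyticity. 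Replacing $\omega(t)$ by $\omega(t) + \sqrt{t}$ preserves the integral condition while forcing $\omega(t)/\log t \to \infty$, so I would then invoke the classical non-quasi-analytic construction (of Denjoy--Carleman/Beurling--Malliavin type): for any $\delta > 0$ there is a nonzero $s \in C^\infty$ supported in an interval of length $\delta$ with $|S(\xi)| \le C e^{-\omega(|\xi|)}$ for real $\xi$. Then $|P(\xi) S(\xi)|$ stays bounded and in fact decays faster than every polynomial, which closes the plan. The delicate points left to verify are the sharp real-axis estimate on $\log|P|$ for genuinely complex zeros and the precise matching of support functions in the Paley--Wiener--Schwartz step.
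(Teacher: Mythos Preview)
The paper does not give its own proof of this statement: Theorem~\ref{thm:lux_korevaar} is quoted as a result from the literature, attributed to Luxemburg and Korevaar \cite[Theorem~5.2]{LuxemburgKorevaar}, and is used as a black box in Step~2 of the proof of Theorem~\ref{thm:1}. So there is nothing in the present paper to compare your argument against.

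That said, your sketch is in line with how such results are actually proved, and in particular with the original Luxemburg--Korevaar argument: form the genus-zero canonical product $P$ over the prescribed zeros, show that $\log M_P(r)$ satisfies the logarithmic integral condition $\int_1^\infty \log M_P(r)\,r^{-2}\,dr < \infty$ (which, as you note, is essentially equivalent to $\sum 1/|\gamma_n|<\infty$), then multiply by a Paley--Wiener regularizer $S=\widehat{s}$ with $s\in C_c^\infty$ whose real-axis decay beats the growth of $P$. The one place where your outline is slightly misleading is the remark about needing ``genuine cancellation'' for complex $\gamma_n$: the crude bound $\log|1-x/\gamma_n|\le \log(1+|x|/|\gamma_n|)$ is already sufficient, since a standard computation with the counting function $n(t)$ shows that $\omega(r)\coloneqq\sum_n \log(1+r/|\gamma_n|)$ satisfies $\int_1^\infty \omega(r)r^{-2}\,dr<\infty$ precisely when $\sum 1/|\gamma_n|<\infty$. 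The existence of the smooth multiplier $s$ with $|\widehat{s}(\xi)|\le Ce^{-\omega(|\xi|)}$ then follows from the Denjoy--Carleman/Beurling construction, and the Paley--Wiener(--Schwartz) identification of $G=PS$ as the transform of some $g\in C_c^\infty(I)$ goes through as you describe.
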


In addition, we require the following basic lemma, which, in the language of generalized periodic functions, says that every non-trivial integrable and continuous function is not mean-periodic \cite[Proposition 6.3]{pinkus}.

\begin{lemma}\label{lma:conv_el}
    Let $f \in L^1(\R) \setminus \{ 0 \}$ be continuous and let $\mu$ be a compactly supported complex regular Borel measure on $\R$. If $f * \mu = 0$ then $\mu=0$.
\end{lemma}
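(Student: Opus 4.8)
The plan is to take Fourier transforms and to exploit that, because $\supp(\mu)$ is compact, $\widehat\mu$ is the restriction to $\R$ of an entire function, so it cannot vanish on a nonempty open set without vanishing identically; since $f\neq 0$ forces $\widehat f$ to be nonzero on some open set, this will pin down $\widehat\mu\equiv 0$, and hence $\mu=0$. Throughout I would write $\widehat g(\xi)=\int_\R g(t)e^{-i\xi t}\,dt$ for $g\in L^1(\R)$ and $\widehat\mu(\xi)=\int_\R e^{-i\xi t}\,d\mu(t)$, and recall that a complex Borel measure has finite total variation $\abs{\mu}(\R)<\infty$.

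First I would observe that $f*\mu\in L^1(\R)$: by Tonelli, $\int_\R\int_\R\abs{f(x-t)}\,d\abs{\mu}(t)\,dx=\|f\|_{L^1}\,\abs{\mu}(\R)<\infty$, so $f*\mu$ is defined for almost every $x$ and is integrable. A further application of Fubini gives $\widehat{f*\mu}=\widehat f\cdot\widehat\mu$ on $\R$, so the hypothesis $f*\mu=0$ yields
\begin{equation}
\widehat f(\xi)\,\widehat\mu(\xi)=0\qquad\text{for all }\xi\in\R .
\end{equation}
Here $\widehat f$ is bounded and continuous because $f\in L^1(\R)$, and $\widehat f\not\equiv 0$ because $f\neq 0$, by uniqueness of the Fourier transform on $L^1(\R)$. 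Hence $U\coloneqq\{\xi\in\R:\widehat f(\xi)\neq 0\}$ is a nonempty open set, and the displayed identity forces $\widehat\mu\equiv 0$ on $U$.

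Next, since $\supp(\mu)$ is compact, the function $\Phi(z)\coloneqq\int_\R e^{-izt}\,d\mu(t)$ is entire — holomorphy is checked directly, for instance via Morera's theorem, the integration being over a compact set — and $\Phi|_\R=\widehat\mu$. As $\Phi$ vanishes on the nonempty open set $U\subseteq\R$, which has accumulation points in $\C$, the identity theorem gives $\Phi\equiv 0$; in particular $\widehat\mu\equiv 0$ on $\R$. The uniqueness theorem for Fourier transforms of finite measures then yields $\mu=0$, which is the claim. The proof is short and every ingredient is classical; the only steps requiring minor care are the two interchanges of integration order, which rest solely on Fubini–Tonelli and the finiteness of $\abs{\mu}$, so I do not anticipate any real obstacle — the statement is exactly \cite[Proposition 6.3]{pinkus}.
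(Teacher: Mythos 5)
Your argument is correct, and it is the standard proof of this fact: the paper itself gives no proof but cites \cite[Proposition 6.3]{pinkus}, whose argument is precisely this Fourier-analytic one (convolution theorem, analytic continuation of $\widehat\mu$ via compact support, identity theorem, and uniqueness for Fourier--Stieltjes transforms). All the Fubini--Tonelli interchanges and the pointwise definedness of $f*\mu$ check out, so there is nothing to add.
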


We are ready to prove Theorem \ref{thm:1}.

\begin{proof}[Proof of Theorem \ref{thm:1}]
In the following, we assume that the polynomial $q$ is of degree $d \in \N_0$ and is represented by
    \begin{equation}\label{eq:polynomial}
        q(t) = \sum_{j=0}^d \alpha_j t^j, \quad \alpha_j \in \C.
    \end{equation}

    \textbf{Step 1: Sufficiency.} Suppose that $\Lambda \subseteq \R$ is a sequence that satisfies the condition $\sum_{\lambda \in \Lambda \setminus \{ 0 \} } 1/|\lambda| = \infty$. We show that $\mathcal T(q\varphi,\Lambda)$ is complete in $C(I)$ which also yields completeness in $L^p(I)$ for $p \in [1,\infty)$.
    
    Define $\Lambda_1 = \{ \lambda \in \Lambda : \lambda > 0 \}$ and $\Lambda_2=\{ \lambda \in \Lambda : \lambda < 0 \}$. Then there exists $j \in \{ 1,2 \}$ such that
    \begin{equation}\label{eq:divergence_lambda_1}
        \sum_{\lambda \in \Lambda_j} \frac{1}{|\lambda|} = \infty.
    \end{equation}
    Suppose that $j = 1$ (the case $j=2$ can be treated analogously). By Riesz representation theorem \cite[Theorem 6.19]{rudin}, it suffices to show that if $\mu$ is a complex regular Borel measure on the interval $I$ such that
    \begin{equation}\label{eq:vanish}
        \int_I q(t-\lambda)\varphi(t-\lambda) \, d\mu(t) = 0, \quad \lambda \in \Lambda_1,
    \end{equation}
    then $\mu = 0$. Using the factorization $\varphi(t-\lambda) = e^{-ct^2}e^{2ct\lambda}e^{-c\lambda^2}$, we observe that equation \eqref{eq:vanish} is equivalent to the condition
    \begin{equation}
        F(\lambda) = 0, \quad \lambda \in \Lambda_1,
    \end{equation}
    where the entire function $F \in \mathcal{O}(\C)$ is defined by
    $$
    F(z) \coloneqq \int_I q(t-z) e^{2ctz} \, d\tilde{\mu}(t), \quad d \tilde \mu(t) \coloneqq e^{-ct^2}d\mu(t).
    $$
    The fact that $F$ defines an entire function follows from the theory of holomorphic integral transforms \cite[Chapter XII]{lang}.
    Now let $M \coloneqq \sup I$ be the supremum of $I$, and let $z=x+iy \in \mathbb H_+$. Since $x >0$, we obtain the estimate
    $$
    |F(z)| \leq \| \tilde \mu \| e^{2cM x} \sup_{t \in I} |q(t-z)|,
    $$
    where $\| \tilde \mu \|$ denotes the total variation norm of $\tilde \mu$. Since $q$ is of degree $d$ and since $I$ is compact, the map
    \begin{equation}\label{eq:zn1}
        z \mapsto \frac{\sup_{t \in I} |q(t-z)|}{|z+1|^{d+1}}
    \end{equation}
    is bounded on $\mathbb H_+$. Suppose that $C>0$ is an upper bound for the function in \eqref{eq:zn1} on $\mathbb H_+$. Consider functions $G$ and $H$, defined by
    $$
    G(z) \coloneqq H(z) F(z), \quad H(z) \coloneqq \frac{e^{-2cMz}}{(z+1)^{d+1}}.
    $$
    Then $G$ and $H$ are holomorphic on $\mathbb H_+$.
    Moreover, $G$ satisfies the upper bound
    \begin{equation}\label{eq:uniform_bound}
        |G(z)| = \frac{e^{-2cMx}}{|z+1|^{d+1}} |F(z)| \leq C \| \tilde \mu \|, \quad z = x+iy \in \mathbb{H}_+.
    \end{equation}
    Observe, that $H$ is zero-free on $\mathbb H_+$. Hence, since $F$ vanishes on $\Lambda_1$, so does $G$. Consequently, $G$ is an analytic function on $\mathbb H_+$ that vanishes on $\Lambda_1$ and that satisfies the uniform bound \eqref{eq:uniform_bound}.

With $\phi : \mathbb{H_+} \to \mathbb{D}$ defined as in \eqref{eq:phi}, it follows that $\tilde G \coloneqq G \circ \phi^{-1}$ is a bounded holomorphic function on $\mathbb D$. In addition, the divergence condition in \eqref{eq:divergence_lambda_1} implies that the zeros $Z(\tilde G) = \{ \phi(\lambda) : \lambda \in \Lambda_1 \}$ satisfy
    $$
    \sum_{\lambda \in \Lambda_1} 1-|\phi(\lambda)| = \infty.
    $$
    According to Lemma \ref{lma:blaschke}, $G$ must vanish identically.
    
    It follows that $F$ vanishes identically. This implies that equation \eqref{eq:vanish} holds not only for $\lambda \in \Lambda_1$ but for all $\lambda \in \R$. Therefore, the convolution of $\mu$ with the function $t \mapsto q(-t)\varphi(-t)$ vanishes identically. Lemma \ref{lma:conv_el} implies that $\mu =0$.

    \textbf{Step 2: Necessity.} Let $\Lambda \subseteq \R$ be a sequence such that $\sum_{\lambda \in \Lambda \setminus \{ 0 \}} 1/|\lambda| < \infty$. By Riesz representation theorem it suffices to construct a non-trivial complex regular Borel measure $\mu$ on $I$ such that
    $$
    \int_I q(t-\lambda) e^{-c(t-\lambda)^2} \, d\mu(t) = 0, \quad \lambda \in \Lambda.
    $$
    
    To do so, we start by defining a related sequence $\Gamma \subseteq \R$ as follows:
    $$
    \Gamma = \left \{ \underbrace{0,0, \dots, 0}_{d+1 \ \mathrm{times}}, \underbrace{2ic\lambda_1, 2ic\lambda_1, \dots, 2ic\lambda_1}_{d+1 \ \mathrm{times}}, \underbrace{2ic\lambda_2, 2ic\lambda_2, \dots, 2ic\lambda_2}_{d+1 \ \mathrm{times}},  \dots \right \}.
    $$
    By definition of $\Gamma$ it holds that
    $$
    \sum_{\substack{\gamma \in \Gamma \\ \gamma \neq 0}} \frac{1}{|\gamma|} = \frac{d+1}{2c} \sum_{\substack{\lambda \in \Lambda \\ \lambda \neq 0}} \frac{1}{|\lambda|} < \infty.
    $$
    According to Theorem \ref{thm:lux_korevaar}, there exists a function $g \in C^\infty(\R) \setminus \{ 0 \}$ with support in $I$ such that the (non-trivial) entire function
    $$
    G(z) = \int_I e^{-izt}g(t) \, dt
    $$
    vanishes on $\Gamma$. By construction of $\Gamma$, it follows that every point $2ic\lambda$ with $\lambda \in \Lambda$ is a zero of order at least $d+1$ of $G$. Notice, that if $\lambda \in \Lambda$ then
    $$
    G(2ic\lambda) = \int_I e^{2c\lambda t} g(t) \, dt.
    $$
    Using the definition of the polynomial $q$ as given in equation \eqref{eq:polynomial}, we obtain that for every $z \in \C$ it holds that
    \begin{equation}
        \begin{split}
            \int_I q(t-z) e^{2czt} g(t) \, dt & = \sum_{j=0}^d \sum_{k=0}^j \binom{j}{k} \alpha_j (-1)^{j-k} z^{j-k} \int_I t^j e^{2 c z t} g(t) \, dt \\
            & = \sum_{j=0}^d \sum_{k=0}^j \binom{j}{k} \alpha_j (-1)^{j-k} z^{j-k} (-i)^{-j} \left ( \frac{d^j}{dz^j} G \right )(2icz).
        \end{split}
    \end{equation}
    In the second equality we used the fact that we can differentiate under the integral sign.
    Since $G$ has zeros of order at least $d+1$ at every point $2ic\lambda$ where $\lambda \in \Lambda$, it follows that the right-hand side of the previous equation vanishes if $z = \lambda \in \Lambda$. In particular, the entire function
    $$
    z \mapsto e^{-cz^2} \int_I q(t-z) e^{2czt} e^{-ct^2} e^{ct^2} g(t) \, dt
    $$
    vanishes on $\Lambda$. By defining the measure
    $$
    d\mu(t) \coloneqq e^{ct^2} g(t) \, dt,
    $$
    and noting that $\varphi(t-\lambda) = e^{-ct^2}e^{2ct\lambda}e^{-c\lambda^2}$, we have
    $$
    \int_I q(t-\lambda)\varphi(t-\lambda) \, d\mu(t) = 0, \quad \lambda \in \Lambda.
    $$
    Since $\mu$ is a non-trivial complex regular Borel measure on $I$, it follows that $\mathcal T(q\varphi, \Lambda)$ is incomplete in $C(I)$.
    
    In order to show the related result for the spaces $L^p(I), \, p \in [1,\infty)$, we observe that the map $t \mapsto e^{ct^2} g(t)$ that determines the measure $\mu$ does not vanish identically and is an element of $L^{p'}(I)$ for all $p' \in (1,\infty]$. The statement follows from the duality $(L^p(I))^* = L^{p'}(I)$ where $p,p'$ are Hölder conjugate exponents.
\end{proof}

\begin{remark}
We remark that the proof of Zalik's theorem on the completeness of Gaussian translates \cite[Theorem 4]{Zalik} relies on the classical Weierstrass approximation theorem and Theorem \ref{thm:lux_korevaar}. In Zalik's proof, the statement of Luxemburg and Korevaar is directly applicable to the sequence $\Lambda$. In comparison, Theorem \ref{thm:1} provides a characterization for arbitrary products of polynomials with Gaussians using a purely complex-analytic proof.
\end{remark}

\section{Proof of Theorem \ref{thm:2}}

In the following we denote by $\mathbb{P}(\ell,n)$ the space of all complex polynomials in $n$ variables of degree at most $\ell$. Moreover, $T_s f \coloneqq f(\cdot -s)$ denotes the shift of a function $f : \R \to \C$ by $s \in \R$. The proof of Theorem \ref{thm:2} requires two technical lemmas. The first one reads follows.

\begin{lemma}\label{lemma:polynomials}
Let $f : \R \to \C$ be a function, let $a,b \in \R$ with $a \neq 0$, and let  $n \in \N$. Then for every $\ell \in \N$ there exist $n$ polynomials
$$
p_{\ell,1}, \dots, p_{\ell,n}
$$
with the following properties:
\begin{enumerate}
    \item\label{prop:i} For every $\ell \in \N$ and every $j \in \{ 1, \dots, n \}$ it holds that $p_{\ell,j} \in \mathbb{P}(\ell,n)$.
    \item\label{prop:ii} For every $j\in\left\lbrace 1,\dots, n\right\rbrace$, the largest coefficient in absolute value of $p_{\ell, j}$ is bounded from above by $2^{\ell-1}$.
    \item\label{prop:iii} For every coefficient vector $c = \left(c_1,\dots, c_n \right) \in \C^n$, it holds that the linear combination
    \begin{equation}\label{eq:Aell}
    A_\ell\coloneqq T_b f +  \sum\limits_{j=1}^n p_{\ell,j}(c) T_{a(\ell+j-1) + b}f
    \end{equation}
    is an element of $\lspan(\mathcal{T}(F, a\Z))$, where the function $F$ is defined by
$$
F \coloneqq T_b f + \sum\limits_{j=1}^n c_j T_{a j + b} f.
$$
\end{enumerate}
\end{lemma}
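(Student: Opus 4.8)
The plan is to strip away the analysis and reduce the whole statement to a formal identity among finite linear combinations of the shifts $g_m \coloneqq T_{am+b}f = f(\cdot - am - b)$, $m \in \Z$. Since $T_{ak}g_i = g_{i+k}$, writing $c_0 \coloneqq 1$ we have
\[
F = \sum_{i=0}^{n} c_i\, g_i, \qquad T_{ak}F = \sum_{i=0}^{n} c_i\, g_{i+k} \quad (k \in \Z),
\]
so that, under the correspondence $g_m \leftrightarrow x^m$, a combination $\sum_{m \ge 0} \beta_m\, T_{am}F$ matches the polynomial $\big(\sum_{m \ge 0} \beta_m x^m\big)P(x)$, where $P(x) \coloneqq 1 + \sum_{i=1}^n c_i x^i$; likewise $A_\ell$ matches $1 + \sum_{j=1}^n p_{\ell,j}(c)\, x^{\ell+j-1}$. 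Thus property~\ref{prop:iii} will follow once we produce, for each $\ell$, polynomials $q_0,\dots,q_{\ell-1}$ in $c$ and $p_{\ell,1},\dots,p_{\ell,n}$ in $c$ with
\[
\Big(\sum_{m=0}^{\ell-1} q_m(c)\, x^m\Big) P(x) \;=\; 1 + \sum_{j=1}^{n} p_{\ell,j}(c)\, x^{\ell+j-1},
\]
for then $A_\ell = \sum_{m=0}^{\ell-1} q_m(c)\, T_{am}F \in \lspan(\mathcal{T}(F,a\Z))$.

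The natural choice is to let $q_0 = 1$ and $q_m = -\sum_{i=1}^{\min(m,n)} c_i\, q_{m-i}$ for $m \ge 1$, i.e.\ the coefficients of the formal reciprocal $1/P$, and to define $p_{\ell,j}(c)$ as the coefficient of $x^{\ell+j-1}$ in $\big(\sum_{m=0}^{\ell-1} q_m x^m\big)P(x)$. The recursion is tailored precisely so that, for $1 \le M \le \ell-1$, the coefficient of $x^M$ in that product --- namely $\sum_{i=0}^{\min(M,n)} c_i\, q_{M-i}$ --- vanishes, while the constant term is $q_0 = 1$; this gives the displayed identity, with $p_{\ell,j}(c) = \sum_{i=j}^{\min(n,\ell+j-1)} c_i\, q_{\ell+j-1-i}(c)$, since in that range the index $M-i = \ell+j-1-i$ still lies in $\{0,\dots,\ell-1\}$. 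Reading the identity back through $g_m \leftrightarrow x^m$ then yields property~\ref{prop:iii}.

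It remains to verify the size bounds. For property~\ref{prop:i}: induction on the recursion gives $\deg q_m \le m$, so each summand $c_i q_{\ell+j-1-i}$ of $p_{\ell,j}$ has degree at most $1 + (\ell+j-1-i) \le \ell$ because $i \ge j$; hence $p_{\ell,j} \in \mathbb{P}(\ell,n)$. For property~\ref{prop:ii}: writing $\sigma(p)$ for the sum of the absolute values of the coefficients of a polynomial $p$, one has $\sigma(c_i q_{m-i}) = \sigma(q_{m-i})$, so the recursion yields $\sigma(q_m) \le \sum_{k=0}^{m-1}\sigma(q_k)$ and therefore $\sigma(q_m) \le 2^{m-1}$ for $m \ge 1$ by induction; since the indices $\ell+j-1-i$ occurring in $p_{\ell,j}$ are pairwise distinct and contained in $\{0,\dots,\ell-1\}$, the same estimate gives $\sigma(p_{\ell,j}) \le \sum_{k=0}^{\ell-1}\sigma(q_k) \le 2^{\ell-1}$, so in particular the largest coefficient of $p_{\ell,j}$ in absolute value is at most $2^{\ell-1}$.

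The only delicate point is the bookkeeping: one must arrange the index ranges so that the ``gap'' coefficients of $x^1,\dots,x^{\ell-1}$ in the product really do cancel --- which is exactly why $q_m$ has to be the reciprocal coefficients and not some other choice --- and then track how the degree and the coefficient size propagate through the recursion. There is no analytic obstacle, since $f$ is arbitrary and the whole argument is an identity among finitely many of its shifts.
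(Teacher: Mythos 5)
Your proof is correct. At bottom it runs on the same mechanism as the paper's: the paper proceeds by induction, at each step subtracting $p_{\ell,1}(c)\,T_{a\ell}F$ from $A_\ell$ to kill the leading ``gap'' term, and this is exactly one step of the long division you perform in closed form (indeed the paper's $p_{\ell,1}(c)$ coincides with $-q_\ell$ in your notation, so the two constructions produce the same polynomials). What your packaging buys is explicitness: you identify the multipliers $q_m$ as the coefficients of the truncated formal reciprocal of $P(x)=1+\sum_{i=1}^n c_i x^i$ and obtain the closed formula $p_{\ell,j}(c)=\sum_{i=j}^{\min(n,\ell+j-1)}c_i\,q_{\ell+j-1-i}(c)$, whereas the paper only records the one-step recursion \eqref{eq:def_ploy}; your degree and coefficient bookkeeping (via the $\ell^1$-norm $\sigma$, the observation that multiplication by a variable preserves $\sigma$, and the distinctness of the indices $\ell+j-1-i$) then delivers properties \eqref{prop:i} and \eqref{prop:ii} with the same constant $2^{\ell-1}$ as the paper's doubling argument. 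One point worth stating explicitly in a final write-up: the correspondence $T_{am+b}f\leftrightarrow x^m$ need not be injective, since the shifts of $f$ may be linearly dependent; but you only use it in the harmless direction, pushing a polynomial identity in $x$ forward to an identity among the shifts, so nothing is lost.
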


\begin{proof}
We prove the statement via induction over $\ell$. To do so, we first observe that $F \in \lspan(\mathcal{T}(F, a\Z))$. Hence, in the case where $\ell=1$, the statement holds true with polynomials $p_{1,1}, \dots, p_{1,n}$ defined according to
$$
p_{1,j}(x_1,\dots, x_n) \coloneqq x_{j}.
$$
Now suppose that the claim holds true for an arbitrary $\ell \in \N$. We show that it also holds for $\ell+1$.

Define
$$
q_j \coloneqq p_{\ell,j}(c_1,\dots,c_n) , \quad  j \in \{ 1,\dots, N\},
$$
that is, $q_j$ is the evaluation of the polynomial $p_{\ell,j}$ at $c$. With this notation, the map $A_\ell$ as defined in \eqref{eq:Aell} is given by
$$
A_\ell = T_b f + \left ( q_1 T_{a \ell+b} f + q_2 T_{a(\ell+1)+b} f + \dots + q_{N} T_{a(\ell+n-1)+b}f \right ).
$$
By induction hypothesis we have $A_\ell \in \lspan(\mathcal{T}(F, a\Z))$. Observe further that
$$
B_\ell \coloneqq q_1 \cdot T_{a \ell} F \in \lspan(\mathcal{T}(F, a\Z)).
$$
Since $\lspan(\mathcal{T}(F, a\Z))$ is a linear space, it holds that $A_\ell-B_\ell \in \lspan(\mathcal{T}(F, a\Z))$. The latter difference is given by
\begin{equation}\label{eq:A_minus_B}
        A_\ell-B_\ell = T_b f + \left ( \sum_{j=1}^{n-1} \left(q_{j+1} - q_1 c_{j}\right) T_{a(\ell+j)+b} f \right ) - q_1 c_n T_{a(\ell+n)+b}f.
\end{equation}
Now define polynomials $p_{\ell+1,1}, \dots, p_{\ell+1,n}$ by
\begin{equation}\label{eq:def_ploy}
\begin{split}
    p_{\ell+1,j}(x)& \coloneqq p_{\ell,j+1}(x) - p_{\ell,1}(x)x_j, \quad j \in \{1, \dots, n-1 \},  \\
    p_{\ell+1,n}(x) & \coloneqq -p_{\ell,1}(x)x_n.
\end{split}
\end{equation}
Then the difference $A_\ell-B_\ell$ can be written in terms of polynomial evaluations by
$$
A_\ell-B_\ell = T_b f + \sum_{j=1}^n p_{\ell+1,j}(c) T_{a(\ell+j)+b} f.
$$
Thus, by setting $A_{\ell+1} \coloneqq A_\ell-B_\ell$ we obtain the representation claimed in equation \eqref{eq:Aell}.

It remains to show that the polynomials satisfy properties \eqref{prop:i} and \eqref{prop:ii} given in the statement of the Lemma. 

By induction hypothesis, the polynomials $p_{\ell,j}$ are of degree $\ell$. According to the definition of the polynomials $p_{\ell+1,j}$ in \eqref{eq:def_ploy}, it holds that $p_{\ell+1,j} \in \mathbb{P}(\ell+1,n)$.

To show the remaining property, let $C_\ell$ be the largest coefficient in absolute value among all the polynomials $p_{\ell,j}$ where $j \in \{ 1, \dots, n \}$. It follows from the definition of the polynomials $p_{\ell+1,j}$ (as difference of at most two polynomials of the form $p_{\ell,j}$), that the largest coefficient in absolute value of $p_{\ell+1,j}$ does not exceed $2 C_\ell$.
The statement thus follows by induction.
\end{proof}

The second elementary lemma concerns super-exponentially decaying functions.

\begin{lemma}\label{lemma:decay}
Let $I \subseteq \R$ be a compact set and suppose that $f : \R \to \C$ is a measurable function with super-exponential decay. Then for every $a,b \in \R$ with $a \neq 0$ it holds that the function
$$
x \mapsto  \sup_{t \in I} |T_{ax+b}f(t)|
$$
has super-exponential decay.
\end{lemma}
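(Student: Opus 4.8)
The plan is to unwind the definition, use the compactness of $I$ to turn the shift parameter into a lower bound on the distance of the arguments of $f$ from the origin, and then feed that bound into the super-exponential decay of $f$ at an amplified rate. Fix $\gamma \in \R$ (there is no need to reduce to $\gamma > 0$). Since $I$ is compact we may set $R \coloneqq \sup_{t \in I}|t| < \infty$ (the claim being vacuous if $I = \emptyset$), and then for every $t \in I$ and every $x \in \R$ one has the elementary reverse triangle estimate
$$
|t - (ax+b)| \ \ge\ |a|\,|x| - |b| - R .
$$
In particular, once $|x|$ is large enough that the right-hand side exceeds a prescribed threshold $Y > 0$, every argument $t - (ax+b)$ occurring in $T_{ax+b}f(t)$ lies outside $[-Y,Y]$.

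Next, choose the auxiliary rate $\gamma' \coloneqq (|\gamma| + 1)/|a| > 0$, tailored so that $\gamma'|a| - \gamma \ge 1 > 0$. Applying the super-exponential decay hypothesis with parameter $\gamma'$, and using that $|f(y)|e^{\gamma'|y|} \to 0$ as $|y| \to \infty$, there is $Y > 0$ with $|f(y)| \le e^{-\gamma'|y|}$ for all $|y| \ge Y$. Combining this with the distance estimate above, for all $x$ with $|a||x| - |b| - R \ge Y$ we obtain
$$
\sup_{t \in I}\bigl|T_{ax+b}f(t)\bigr| \ \le\ e^{-\gamma'\left(|a||x| - |b| - R\right)} \ =\ e^{\gamma'(|b|+R)}\, e^{-\gamma'|a|\,|x|} .
$$
Multiplying by $e^{\gamma|x|}$ yields an upper bound of the form $e^{\gamma'(|b|+R)}\, e^{-(\gamma'|a| - \gamma)|x|}$, which tends to $0$ as $|x| \to \infty$ because $\gamma'|a| - \gamma \ge 1 > 0$. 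Since $\gamma$ was arbitrary, this is exactly the assertion that $x \mapsto \sup_{t \in I}|T_{ax+b}f(t)|$ has super-exponential decay.

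I do not expect any genuine obstacle here; the lemma is elementary. The only point that requires a moment's care is calibrating the auxiliary rate $\gamma'$ against $a$ so that the gain $e^{-\gamma'|a||x|}$ dominates the loss $e^{\gamma|x|}$, together with the harmless bookkeeping of the constant $e^{\gamma'(|b|+R)}$ produced by the translation parameter $b$ and the diameter of $I$.
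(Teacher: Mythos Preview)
Your proof is correct and follows essentially the same approach as the paper: both use compactness of $I$ and the reverse triangle inequality to force $|t-(ax+b)|$ large, then apply the decay hypothesis for $f$ at a rate scaled by $1/|a|$. The only cosmetic difference is that the paper works with $d=\sup_{t\in I}|t-b|$ and rate $\gamma/|a|$ via an $\varepsilon$-argument, whereas you separate $b$ from $I$ and calibrate $\gamma'=(|\gamma|+1)/|a|$ to get an explicit exponentially decaying bound.
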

\begin{proof}
    Let $\gamma \in \R$, let
    $$
    d \coloneqq \sup_{t \in I} |t-b|,
    $$
    and let $\varepsilon>0$. Since $f$ has super-exponential decay, there exists a constant $R(\varepsilon) > 0$ such that
    $$
    |f(t)|e^{\frac{\gamma}{|a|} t} \leq \varepsilon, \quad |t| \geq R(\varepsilon).
    $$
    For all $x \in \R$ with $|x| \geq \frac{R(\varepsilon)+d}{|a|}$ and every $t \in I$ it holds that $|t-(ax+b)| \geq R(\varepsilon)$. Accordingly, for such $x$ and $t$, we obtain
    \begin{equation}
        \begin{split}
            |T_{ax+b}f(t)|e^{\gamma|x|} & = |f(t-(ax+b))|\exp \left (\tfrac{\gamma}{|a|} \left |-(t-(ax+b))+t-b \right | \right ) \\
            & \leq |f(t-(ax+b))| \exp \left (\tfrac{\gamma}{|a|} \left | (t-(ax+b)) \right | \right )\exp\left (\tfrac{\gamma}{|a|}|t-b| \right ) \\
            & \leq \left ( \sup_{t \in I} \exp\left (\tfrac{\gamma}{|a|}|t-b| \right ) \right ) \varepsilon.
        \end{split}
    \end{equation}
    If $C$ denote the supremum on the right-hand side of the previous inequality, then
    $$
    \left ( \sup_{t \in I} |T_{ax+b}f(t)| \right ) e^{\gamma |x|} \leq C \varepsilon, \quad |x| \geq \frac{R(\varepsilon)+d}{|a|}.
    $$
    The statement follows from that.
\end{proof}

We are prepared to prove the second main result of the present paper.

\begin{proof}[Proof of Theorem \ref{thm:2}]
The implications $(3) \implies (2)$ and $(2) \implies (1)$ are clear. It remains to show that (1) implies (3).

Suppose that $\mathcal{T}(f,a\Z+b)$ is complete in $X$ and let $F \in \lspan(\mathcal{T}(f,a\Z+b)) \setminus \{ 0 \}$. We need to show that
$$
\mathrm{cl} \left ( \lspan(\mathcal T(F,a\Z)) \right ) = X,
$$
where $\mathrm{cl}(\cdot)$ denotes the closure with respect to the norm of $X$.

By definition, there exist $m_0,m \in \Z$ with $m_0 < m$ such that $F$ is represented by
$$
F = \sum_{k=m_0}^m d_k T_{a k +b}f,
$$
where $d_k \in \C$ and $k \in \{m_0,\dots, m \}$. Without loss of generality, we can assume that $d_{m_0}\neq 0$. Set $n = m-m_0$ and 
\begin{equation*}
    c_j = \frac{d_{m_0+j}}{d_{m_0}}, \quad j\in\left\lbrace 1,\dots, n \right\rbrace. 
\end{equation*}
Finally, define
$$
F_0 \coloneqq T_bf + \sum\limits_{j=1}^n c_j T_{a j + b} f = \tfrac{1}{d_{m_0}} \cdot T_{-a m_0}\, F.
$$
Then it holds that
\begin{equation}\label{eq:G_G0}
    \lspan(\mathcal{T}(F,a\Z)) = \lspan(\mathcal{T}(F_0,a\Z)).
\end{equation}

\textbf{Step 1: Application of Lemma \ref{lemma:polynomials}.}
By Lemma \ref{lemma:polynomials} and relation \eqref{eq:G_G0}, it follows that for every $\ell \in \N$, the function
\begin{equation}\label{eq:g_p_Q}
    A_\ell = T_bf + \sum\limits_{j=1}^n p_{\ell,j}(c) T_{a(\ell+j-1)+b} f,
\end{equation}
with polynomials $p_{\ell,j}$ defined according to Lemma \ref{lemma:polynomials}, belongs to the span of $\mathcal{T}(F,a\Z)$. Consequently, the maximal coefficient in absolute value among all polynomials $p_{\ell,j}$ is upper bounded by $2^{\ell-1}$. Notice that a polynomial of degree $\ell$ in $n$ variables has at most ${n+\ell} \choose \ell$ non-zero coefficients. The bound 
$$
{n \choose k} \leq \exp(k)\left(\frac{n}{k}\right)^k
$$
yields the estimate
\begin{equation}
    \begin{split}
        \abs{p_{\ell,j}(c)} & \leq {{n+\ell} \choose {\ell}} 2^{\ell-1} \max \{ \abs{c_1}, \dots, \abs{c_N} \}^{\ell} \\
        & \leq \exp(\ell) \left ( \frac{n+\ell}{\ell} \right )^{\ell} 2^{\ell-1} \max \{ \abs{c_1}, \dots, \abs{c_N} \}^{\ell}.
    \end{split}
\end{equation}
Since $\frac{n+\ell}{\ell} \to 1$ as $\ell \to \infty$, it follows that there exists a constant $C>0$ which is independent of $\ell$ and $j$, such that
\begin{equation}\label{bndd}
    \abs{p_{\ell,j}(c)} \leq C^\ell, \quad \ell \in \N, \quad j \in \{ 1, \dots, n \}.
\end{equation}

\textbf{Step 2: Taking the limit $\ell \to \infty$.}
Let $Q_\ell$ be the function defined by
$$
Q_\ell \coloneqq  
\sum\limits_{j=1}^n p_{\ell,j}(c) T_{a(\ell+j-1)+b}f.
$$
By equation \eqref{eq:g_p_Q}, we have $A_\ell = T_bf + Q_\ell \in \lspan(\mathcal{T}(F,a\Z))$ for every $\ell \in \N$. The bound \eqref{bndd} implies that
\begin{equation}\label{eq:Q_ell_uni}
    \sup_{t \in I} |Q_\ell(t)| \leq \sum_{j=1}^{n} C^\ell \sup_{t \in I} | T_{a(\ell+j-1)+b}f(t)|.
\end{equation}
Since $I$ is compact and since $f$ has super-exponential decay, Lemma \ref{lemma:decay} shows that each summand on the right-hand side of equation \eqref{eq:Q_ell_uni} converges to zero as $\ell \to \infty$. Consequently, $A_\ell \to T_b f$ uniformly on $I$. In particular, $A_\ell \to T_b f$ in the norm of $X$ (the uniform norm if $X=C(I)$ and the $L^p$-norm if $X=L^p(I)$). Since $A_\ell \in \lspan(\mathcal{T}(F,a\Z))$ for every $\ell \in \N$, it follows that
\begin{equation}\label{eq:g_in_closure}
    T_b f \in \mathrm{cl} \left ( \lspan(\mathcal{T}(F,a \Z)) \right ).
\end{equation}

\textbf{Step 3: Shift-invariance.}
The space $Y \coloneqq \mathrm{cl} \left (\lspan(\mathcal{T}(F,a\Z)) \right )$ is invariant under $a\Z$-shifts, that is, if $g \in Y$ then $T_{aj} g \in Y$ for all $j \in \Z$. Together with equation \eqref{eq:g_in_closure} it follows that $T_{aj+b}f \in Y$ for all $j \in \Z$. Thus,
$$
\lspan(\mathcal{T}(f,a\Z+b)) \subseteq  Y \subseteq X.
$$
Since $\mathcal{T}(f,a\Z+b)$ was assumed to be complete in $X$, it follows that $Y$ is complete in $X$.
\end{proof}

\bibliographystyle{abbrv}
\bibliography{bibfile}

\begin{thebibliography}{10}

\bibitem{abakumov2008cyclicity}
E.~Abakumov, A.~Atzmon, and S.~Grivaux.
\newblock Cyclicity of bicyclic operators and completeness of translates.
\newblock {\em Math. Ann.}, 341(2):293--322, 2008.

\bibitem{achieser2013theory}
N.~I. Akhiezer.
\newblock {\em {Theory of Approximation}}.
\newblock Dover Publications, 1992.

\bibitem{olevskii1}
A.~Atzmon and A.~Olevskii.
\newblock {Completeness of Integer Translates in Function Spaces on $\mathbb
  R$}.
\newblock {\em J. Approx. Theory}, 87(3):291--327, 1996.

\bibitem{bendory2022algebraic}
T.~Bendory and D.~Edidin.
\newblock Algebraic theory of phase retrieval.
\newblock {\em Notices Amer. Math. Soc.}, 69(9):1487--1495, 2022.

\bibitem{borwein1995dense}
P.~Borwein and T.~Erd{\'e}lyi.
\newblock {Dense Markov spaces and unbounded Bernstein inequalities}.
\newblock {\em J. Approx. Theory}, 81(1):66--77, 1995.

\bibitem{borwein2012polynomials}
P.~Borwein and T.~Erd{\'e}lyi.
\newblock {\em {Polynomials and polynomial inequalities}}, volume 161.
\newblock Springer Science \& Business Media, 2012.

\bibitem{olevskii2}
J.~Bruna, A.~Olevskii, and A.~Ulanovskii.
\newblock {Completeness in $L^1 (\mathbb R)$ of discrete translates}.
\newblock {\em Rev. Mat. Iberoam.}, 22(1):1 -- 16, 2006.

\bibitem{christ2023examples}
M.~Christ, B.~Pineau, and M.~A. Taylor.
\newblock {Examples of H\"older-stable Phase Retrieval}.
\newblock {\em Math. Res. Lett.}, to appear, 2024.

\bibitem{grohsliehr1}
P.~Grohs and L.~Liehr.
\newblock {Injectivity of Gabor phase retrieval from lattice measurements}.
\newblock {\em Appl. Comput. Harmon. Anal.}, 62:173--193, 2023.

\bibitem{grohs2022completeness}
P.~Grohs, L.~Liehr, and I.~Shafkulovska.
\newblock From completeness of discrete translates to phaseless sampling of the
  short-time fourier transform.
\newblock {\em arXiv:2211.05687}, 2022.

\bibitem{jaming2023uniqueness}
P.~Jaming and M.~Rathmair.
\newblock {Uniqueness of phase retrieval from three measurements}.
\newblock {\em Adv. Comput. Math.}, 49(4):47, 2023.

\bibitem{kahane}
J.~P. Kahane.
\newblock {\em {Lectures on On Mean Periodic Functions}}.
\newblock Tata Lectures on Mathematics, 1959.

\bibitem{LANDAU1972438}
H.~Landau.
\newblock On the completeness of a set of translates.
\newblock {\em J. Approx. Theory}, 5(4):438--440, 1972.

\bibitem{lang}
S.~Lang.
\newblock {\em {Complex Analysis}}.
\newblock Springer New York, 2nd edition, 1985.

\bibitem{lev2024completenessuniformlydiscretetranslates}
N.~Lev.
\newblock Completeness of uniformly discrete translates in {$L^p(\mathbb{R})$}.
\newblock {\em J. Anal. Math.}, to appear, 2024.

\bibitem{olevskii6}
N.~Lev and A.~Olevskii.
\newblock {Wiener’s `closure of translates’ problem and
  Piatetski-Shapiro’s uniqueness phenomenon}.
\newblock {\em Ann. of Math.}, 174:519--541, 2011.

\bibitem{LuxemburgKorevaar}
W.~A.~J. Luxemburg and J.~Korevaar.
\newblock {Entire Functions and Müntz-Szász Type Approximation}.
\newblock {\em Trans. Amer. Math. Soc.}, 157:23--37, 1971.

\bibitem{min1999denseness}
G.~Min.
\newblock On the denseness of rational systems.
\newblock {\em J. Approx. Theory}, 98(2):197--202, 1999.

\bibitem{olevskii1997completeness}
A.~Olevskii.
\newblock Completeness in {$L^2(\mathbb{R})$} of almost integer translates.
\newblock {\em C. R. Math. Acad. Sci. Paris}, 324(9):987--991, 1997.

\bibitem{pinkus}
A.~Pinkus.
\newblock {Density in Approximation Theory}.
\newblock {\em Surv. Approx. Theory}, 1:1--45, 2005.

\bibitem{rudin}
W.~Rudin.
\newblock {\em Real and Complex Analysis}.
\newblock McGraw-Hill, 3rd edition edition, 1986.

\bibitem{schwartz}
L.~Schwartz.
\newblock {Théorie Générale des Fonctions Moyenne-Périodiques}.
\newblock {\em Ann. of Math.}, 48(4):857--929, 1947.

\bibitem{segal1944span}
I.~E. Segal.
\newblock The span of the translations of a function in a {Lebesgue} space.
\newblock {\em Proc. Natl. Acad. Sci. USA}, 30(7):165--169, 1944.

\bibitem{wellershoff}
M.~Wellershoff.
\newblock {Injectivity of sampled Gabor phase retrieval in spaces with general
  integrability conditions}.
\newblock {\em J. Math. Anal. Appl.}, 530(2), 2021.

\bibitem{wiener}
N.~Wiener.
\newblock {Tauberian Theorems}.
\newblock {\em Ann. of Math.}, 33(1):1--100, 1932.

\bibitem{Zalik}
R.~A. Zalik.
\newblock On approximations by shifts and a theorem of {Wiener}.
\newblock {\em Trans. Amer. Math. Soc.}, 243, 1978.

\end{thebibliography}

\end{document}